\newtheorem{thm}{Theorem}[section]
\newtheorem*{prth1.2}{Proof of Theorem 1.2}
\newtheorem*{prth1.3}{Proof of Theorem 1.3}
\newtheorem*{prcor1.2}{Proof of Corollary 1.2}
\numberwithin{equation}{section}
\numberwithin{theorem}{section}
\newtheorem{lem}[thm]{Lemma}
\numberwithin{proposition}{section}
\numberwithin{lemma}{section}
\numberwithin{corollary}{section}
\numberwithin{remark}{section}
\newcommand{\Tmax}{T_{\rm max}}
\newcommand{\RN}{\mathbb{R}^N}
\newcommand{\R}{\mathbb{R}}
\def\Xint#1{\mathchoice
    {\XXint\displaystyle\textstyle{#1}}%
    {\XXint\textstyle\scriptstyle{#1}}%
    {\XXint\scriptstyle\scriptscriptstyle{#1}}%
    {\XXint\scriptscriptstyle\scriptscriptstyle{#1}}%
    \!\int}
\def\XXint#1#2#3{\setbox0=\hbox{$#1{#2#3}{\int}$}
    \vcenter{\hbox{$#2#3$}}\kern-0.5\wd0}
\def\dashint{\Xint{\raise4pt\hbox to7pt{\hrulefill}}}
\begin{document}
\thispagestyle{empty}
\setcounter{page}{1}
\noindent
\begin{center}
{\bf { \Large Blow-up phenomena for a  chemotaxis system with flux limitation
}}

\begin{center}
    \vspace*{1cm} 
{\bf 
{ M.Marras \footnote{ Dipartimento di Matematica e Informatica, Universit\'a di Cagliari, via Ospedale 72, 09124 Cagliari (Italy), mmarras@unica.it},
S.Vernier-Piro \footnote{ Facolt\'a di Ingegneria e Architettura, Universit\'a di Cagliari, Viale Merello 92, 09123 Cagliari (Italy), svernier@unica.it},
 T.Yokota \footnote{Department of Mathematics, Tokyo University of Science, 1-3, Kagurazaka, Shinjuku-ku, Tokyo 162-8601 (Japan), yokota@rs.tus.ac.jp} }}\\
\end{center}

\end{center}


\begin{abstract}
In this paper we consider nonnegative solutions of the following parabolic-elliptic cross-diffusion system

\begin{equation*}
\left\{ \begin{array}{l}
\begin{aligned}
&u_t = \Delta u  -  \nabla(u  f(|\nabla v|^2 )\nabla v),   \\[6pt]
&0= \Delta v -\mu +  u ,  \quad \int_{\Omega}v =0,  \ \ \mu := \frac 1 {|\Omega|} \int_{\Omega} u  dx, \\[6pt]
&u(x,0)= u_0(x),
\end{aligned}
\end{array} \right.
\end{equation*}
in $\Omega \times (0,\infty)$, with $\Omega$ a ball in $\RN$, $N\geq 3$ under homogeneous Neumann boundary conditions and $f(\xi) = (1+ \xi)^{-\alpha}$, $0<\alpha < \frac{N-2}{2(N-1)}$, which describes gradient-dependent limitation of cross diffusion fluxes.
Under conditions on $f$ and initial data, we prove that a solution which blows up in finite time in $L^\infty$-norm, blows up also in $L^p$-norm for some $p>1$.  Moreover, a lower bound of blow-up time is derived. 
\vskip.2truecm
\noindent{\bf AMS Subject Classification }{Primary: 35B44; Secondary: 35Q92, 92C17.}
\vskip.2truecm
\noindent{\bf Key Words:} finite-time blow-up; chemotaxis.

\end{abstract}


\section{Introduction} \label{Intro}

In this paper we consider the chemotaxis system with flux limitation,
\begin{equation} \label{sys1}
\begin{cases}
u_t = \Delta u  -  \nabla(u  f(|\nabla v|^2 )\nabla v),   \quad & x\in \Omega, \ t>0,\\[2mm]
0= \Delta v -\mu +  u ,  \qquad \qquad \qquad &x \in \Omega, \ t>0, \\[2mm]
\frac{\partial u}{\partial \nu}=\frac{\partial v}{\partial \nu}= 0, \quad  \qquad \qquad \qquad &x \in \partial \Omega, \ t>0, \\[2mm]
u(x,0)= u_0(x), \qquad  \qquad &x \in \Omega,
 \end{cases}
\end{equation}
with $\Omega$ a ball in $\mathbb{R}^N$, $N\geq 3$, $\mu=  \frac 1{ |\Omega|}\int u dx >0$, $\int_{\Omega} v dx=0$, $f \in C^2([0,\infty))$. 
We assume that the initial data $u_0(x) \in C^0(\overline\Omega)$, $u_0 \geq 0$.\\
System \eqref{sys1} is a modified version of the well known Keller--Segel model
 \begin{equation*} \label{KL}
\begin{cases}
u_t = \Delta u  -  \nabla(u \nabla v),    & x \in \Omega , \ t>0,\\[6pt]
v_t= \Delta v -\mu +  u ,   & x \in \Omega, \ t>0, 
\end{cases}
\end{equation*}
proposed by Keller and Segel \cite{KS_1970} in 1970, which is a mathematical model describing aggregation phenomena of organisms due to chemotaxis, i.e., the directed movement of cell density $u(x,t)$ at the position $x$ and at the time $t$ in response to the gradient of a chemical attractant $v(x,t)$. The presence of the elliptic equation in \eqref{sys1} instead of the parabolic one reflects the situation where the chemicals diffuse much faster than cells move.\\
 For decades various Keller--Segel type systems have been extensively studied by many authors.\\
 In \cite{BBTW}, the authors propose a very exhaustive survey and analysis focused on classical and modified Keller--Segel models. Moreover, other contributions (e.g., \cite{IY}, \cite{MTV}, \cite{MVP}, \cite{MVV} and \cite{MS}) investigate the behavior of the solutions to chemotaxis systems, specifically boundedness, decay, blow-up properties and non-degeneracy of blow-up points. For more general Keller--Segel systems involving three equations of fully parabolic type or parabolic-elliptic-elliptic type,  see \cite{CT}, \cite{CMTY} and the reference therein. \\
 The finite-time blow-up of nonradial solutions of \eqref{sys1} is investigate in \cite{Nagai}, where some conditions on the mass and the moment of the initial data are introduced, with $f(|\nabla v|^2)=1$ on a bounded domain in $\mathbb{R}^2$.\\
Bellomo and Winkler \cite{BW} consider the following chemotaxis system
\begin{equation} \label{sysBW}
\left\{ \begin{array}{l}
\begin{aligned}
&u_t = \nabla \cdot \Big( \frac{ u \nabla u}{\sqrt{u^2+|\nabla u|^2}} \Big)  - \chi \nabla \cdot \Big( \frac{ u \nabla v} {\sqrt{1+|\nabla v|^2}} \Big),   \\[6pt]
&0= \Delta v -\mu +  u ,
\end{aligned}
\end{array} \right.
\end{equation}
under the initial condition $u_0(x)>0$ and no-flux boundary conditions, when the spatial domain $\Omega$ is a ball in $\mathbb{R}^N$, $N\geq 1$, $ \mu := \frac 1 {|\Omega|} \int_{\Omega} u_0  dx$. The authors prove that if $\chi>1$ then, for any choice of $m$ with
\begin{equation*}
\begin{cases}
m> \frac 1 {\sqrt{\chi^2 -1}} , & \text{if} \ N=1,   \\[2mm]
m>0 \ \ {\rm is\ arbitary,} & \text{if} \ N\geq 2,
\end{cases}
\end{equation*}
there exist positive initial data $u_0 \in C^3(\overline{\Omega})$, $\int_{\Omega} u_0 dx=m$,  which are such that the problem \eqref{sysBW} possesses,  for some $T_{max}>0$, a uniquely determined classical solution $(u,v)$ in $\Omega \times (0,T_{max})$, blowing up at time $T_{max}$ in the sense that $\displaystyle \limsup_{t \nearrow T_{max}} \|u(x,t)\|_{L^{\infty}} = \infty$. These results are a continuation of the analytical study presents in \cite{BW2} of the flux-limited chemotaxis model \eqref{sysBW} in which the main results assert the existence of a unique classical solution of \eqref{sysBW}, extensible in time up to a maximal $T_{max} \in (0,\infty]$ which has the property that if $T_{max} <\infty$ then  $\displaystyle \limsup_{t \nearrow T_{max}} \|u(\cdot,t)\|_{L^{\infty}} = \infty$.\\
In \cite{ChMT} Chiyoda et al.\ consider the system 
\begin{equation} \label{sysChMT}
\left\{ \begin{array}{l}
\begin{aligned}
&u_t = \nabla \cdot \Big( \frac{ u^p \nabla u}{\sqrt{u^2+|\nabla u|^2}} \Big)  - \chi \nabla \cdot \Big( \frac{u^q \nabla v}{\sqrt{1+|\nabla v|^2}}  \Big),   \\[6pt]
&0= \Delta v -\mu +  u ,
\end{aligned}
\end{array} \right.
\end{equation}
in a ball in $\mathbb{R}^N$, $N\in \mathbb{N}$, under no-flux boundary conditions and initial condition $u_0(x)>0$.  
Assuming suitable conditions for $\chi$ and $u_0$  when  $1 \leq p \leq q$, they obtain existence of blow-up solutions  of \eqref{sysChMT}. When $p=q=1$ the system \eqref{sysChMT} reduces to \eqref{sysBW}.\\
In \cite{MOT} Mizukami et al.\ for the solutions to the problem \eqref{sysChMT} obtain 
\begin{itemize}
\item  if $p,q  \ge 1,$ local existence and extensibility criterion ruling out gradient blow-up; 
\item if $p> q+1 - \frac 1 N$, global existence and boundedness. 
\end{itemize}

Negreanu and Tello in \cite{NT} consider the case when $f(|\nabla v|^2)= \chi  |\nabla v|^{p-2}$, i.e.,
\begin{equation}\label{Negreanu}
\begin{cases}
u_t = \Delta u  -   \nabla \cdot (\chi u  |\nabla v|^{p-2}\nabla v),   \\[2mm]
0= \Delta v -\mu +  u, 
\end{cases}
\end{equation}
with homogeneous Neumann boundary conditions and nonnegative initial data $u_0(x)$ with $ \mu := \frac 1 {|\Omega|} \int_{\Omega} u_0  dx$, $\chi$ a positive constant and $p$ so that
\begin{equation*}
\begin{cases}
p\in (1, \infty),  & \text{if} \ \ N=1, \\[2mm]
p \in \Big(1, \frac{N}{N-1} \Big), & \text{if} \ \ N\geq 2.
\end{cases}
\end{equation*}

Under suitable assumptions on the data, they obtain for the solutions of \eqref{Negreanu} uniform bounds in $ L^{\infty}(\Omega) $ and the global existence, while for the one-dimensional case, the existence of infinitely many non-constant steady-states for $p\in(1,2)$ for any $\chi$ positive and a given positive mass is obtained.\\
In this paper we focus our attention on blow-up phenomena, extensively studied both in the elliptic and in the parabolic cases (see for instance \cite{MP}, \cite{MV} and references therein).\\
For the solutions of \eqref{sys1}, in \cite{W2020}, Winkler proves that, if $f(\xi) \geq (1+\xi)^{-\alpha}$ with $0<\alpha < \frac{N-2}{2(N-1)}$, then throughout a considerably large set of radially symmetric initial data, the blow-up phenomenon, with respect to the $ L^{\infty}$ norm of $u$, occurs in finite time.\\
This result is contained in the following theorem.

\begin{thm}[\cite{W2020} Finite-time blow-up in $L^\infty$-norm]\label{BULinfty}
Let\/ $\Omega\equiv B_R(0) \subset \R^N$, $N \geq 3$ and 
$R>0$, and let $f$ satisfy
\begin{equation}\label{f}
f\in C^2([0, \infty)), \  \text{as well as} \ f(\xi) \ge k_f (1+\xi)^{-\alpha} \quad \text{for all} \ \xi\geq 0
\end{equation}
with some $k_f >0$ and 
\begin{equation}\label{alpha}
0< \alpha < \frac{N-2}{2(N-1)}.
\end{equation}
Then for any choice of $\mu>0$ one can find $R_0=R_0(\mu) \in (0,R)$ with the property that whenever $u_0$ satisfies
\begin{equation}\label{u_0}
 u_0\in C^0(\overline{\Omega}), \ u_0 \ \text{nonnegative with} \ \frac 1 {|\Omega|}\int_{\Omega} u_0 dx =\mu >0
\end{equation}
 and
 \begin{equation} \label{int u_0}
u_0 \ is \ radially \ \text{symmetric with} \ \int_{B_{r}(0)} u_0 dx \geq \int_{\Omega} u_0 dx, \ \forall \ r\in(0,R)
\end{equation}
as well as 
\begin{equation}\label{int BR_0}
\frac 1 {|\Omega|}\int_{B_{R_0}(0)}u_0 dx \geq \frac {\mu} 2 \Big(\frac R{R_0}\Big)^N,
\end{equation}
the corresponding solution $(u,v)$ of \eqref{sys1} blows up in finite time; that is, for the uniquely determined local classical solution, maximally extended up to some time $T_{max} \in (0,\infty]$ according to Lemma \ref{LSE} below, we then have $T_{max} <\infty$ and 
\begin{align}\label{blowupinfty}
\limsup_{t \nearrow T_{max}} \|u(\cdot, t)\|_{L^\infty(\Omega)}=\infty.
\end{align}
\end{thm}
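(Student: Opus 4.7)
Since the statement is quoted from \cite{W2020}, I would follow the standard radial mass-accumulation strategy that Winkler has pioneered for Keller--Segel variants. The plan is to reduce the problem to a scalar degenerate parabolic equation in one spatial variable, and then derive a superlinear ODI for a carefully weighted moment of the mass distribution.

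\textbf{Step 1 (reduction to a scalar equation).} Exploiting the radial symmetry preserved by the flow, I would introduce the cumulative mass
\[
w(s,t)=\int_0^{s^{1/N}}\rho^{N-1}u(\rho,t)\,d\rho,\qquad s\in[0,R^N],
\]
so that $w_s=\frac{1}{N}u$ and $w(R^N,t)=\frac{\mu|\Omega|}{N\omega_{N-1}}$ is conserved. Integrating the elliptic equation against $r^{N-1}$ yields the explicit representation
\[
r^{N-1}v_r(r,t)=\frac{\mu}{N}r^N-Nw\bigl(r^N,t\bigr),
\]
which lets me eliminate $v$ and express the advective coefficient entirely in terms of $w$. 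Substituting into the $u$-equation and rewriting in the variable $s=r^N$ produces a scalar equation of the form
\[
w_t=N^2 s^{2-2/N}w_{ss}+N\bigl(\mu s-Nw\bigr)\,f\!\left(\Bigl|\tfrac{\mu}{N}s^{1/N}-N s^{1/N-1}w\Bigr|^{2}\right)w_s.
\]

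\textbf{Step 2 (design of a moment functional).} The concentration hypotheses \eqref{int u_0}--\eqref{int BR_0} encode that $w(s,0)$ lies strictly above the value $\frac{\mu s}{N}$ corresponding to a uniform distribution, at least on an initial stretch $s\in(0,s_0)$ with $s_0=R_0^N$. I would therefore introduce a weighted deficit functional of the form
\[
\phi(t)=\int_0^{s_0} s^{-\gamma}\bigl(s_0-s\bigr)\bigl[w(s,t)-\tfrac{\mu s}{N}\bigr]\,ds
\]
for a suitable $\gamma\in(1,2)$ to be fixed later. The monotone pointwise comparison $w(\cdot,t)\ge\tfrac{\mu s}{N}$, which should be shown to be preserved in time via the sign of the drift (here the hypothesis \eqref{int u_0} is used), guarantees that $\phi(t)>0$ on the existence interval.

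\textbf{Step 3 (superlinear ODI).} Differentiating $\phi$, plugging in the scalar equation, and integrating by parts twice, I would split the resulting expression into a diffusive contribution and an advective contribution. On the advective term I would use the lower bound $f(\xi)\ge k_f(1+\xi)^{-\alpha}$ together with the representation of $v_r$ to obtain
\[
f(|\nabla v|^2)\ge c\bigl(1+w^{2}s^{2/N-2}\bigr)^{-\alpha},
\]
and then estimate the drift integral from below by a power of $w$. Using Jensen's/Hölder's inequality to compare the weighted integrals, the outcome should be an inequality of the form
\[
\phi'(t)\ge C_1\,\phi(t)^{1+\kappa}-C_2,
\]
for some $\kappa>0$ and constants depending on $\mu,R_0$. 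The exponent $\kappa$ turns out to be positive precisely when $\alpha<\frac{N-2}{2(N-1)}$, which is where \eqref{alpha} enters decisively.

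\textbf{Step 4 (blow-up and main obstacle).} Provided the initial concentration $R_0$ is chosen so small that $\phi(0)$ exceeds the equilibrium $(C_2/C_1)^{1/(1+\kappa)}$ by a definite margin, a standard ODE comparison yields $\phi(t)\to\infty$ in finite time, and hence $T_{\max}<\infty$; the extensibility criterion from Lemma \ref{LSE} then upgrades this to \eqref{blowupinfty}. The main obstacle I expect is Step 3: the flux limitation $f(\xi)\asymp(1+\xi)^{-\alpha}$ weakens the cross-diffusion drift exactly at the points where $u$ is large, so one has to balance the loss $(1+|\nabla v|^2)^{-\alpha}$ against the gain coming from the concentration $w\gg\frac{\mu s}{N}$, and the admissible window for $\gamma$ in the weight $s^{-\gamma}(s_0-s)$ is constrained from two sides by integrability near $s=0$ and by the need to produce a genuinely superlinear nonlinearity in $\phi$. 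It is this balancing act that singles out the range \eqref{alpha}.
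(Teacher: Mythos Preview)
The paper does not contain its own proof of this theorem: Theorem~\ref{BULinfty} is quoted verbatim from Winkler \cite{W2020} and used as a black box to launch Theorems~\ref{BULp} and~\ref{LB}. So there is no ``paper's proof'' to compare against; your task here is really to reproduce the argument of \cite{W2020}.

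That said, your outline is exactly the strategy Winkler employs in \cite{W2020}: pass to the cumulative mass $w(s,t)$ in the variable $s=r^N$, eliminate $v$ via the explicit formula for $r^{N-1}v_r$, derive the scalar degenerate parabolic equation for $w$, show the pointwise bound $w(s,t)\ge \frac{\mu}{N}s$ is propagated, and then test against a weight of the form $s^{-\gamma}(s_0-s)$ to obtain a superlinear ODI for the moment functional $\phi$. Your identification of where the threshold $\alpha<\frac{N-2}{2(N-1)}$ enters---namely, in balancing the flux-limitation factor $(1+|\nabla v|^2)^{-\alpha}$ against the concentration gain so that the drift contribution dominates with a genuinely superlinear power of $\phi$---is the heart of the matter and is correctly located. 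Two small corrections: in your Step~1 the representation should read $r^{N-1}v_r=\frac{\mu}{N}r^N-w(r^N,t)$ (no extra factor $N$ in front of $w$), which propagates into the drift term of the scalar equation; and the preservation of $w\ge\frac{\mu}{N}s$ is established in \cite{W2020} by a comparison argument for the scalar equation rather than a direct sign observation, so you should be prepared to invoke a one-sided maximum principle there.
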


\vspace{4mm}

The first purpose of this paper is to prove that the solutions of \eqref{sys1} blow up in $L^p$-norm, for some $p>1$, if they blow up in $L^{\infty}$-norm.\\  

\begin{thm}[Finite-time blow-up in $L^p$-norm]\label{BULp}
Let\/ $\Omega \equiv B_R(0) \subset\mathbb{R}^N$, $N\geq 3$ and $R>0$. 
Then, a classical solution $(u, v)$ of \eqref{sys1}  for 
$t \in (0, T_{max})$ and with $f(\xi)= k_f (1+\xi)^{-\alpha}$ with some $k_f >0$,
provided by Theorem~\ref{BULinfty}, is such that for all $\frac{N}{2}<p< N$, 
\begin{align*}
\limsup_{t \nearrow T_{max}}\, 
\left\|u(\cdot,t)\right\|_{L^{p}(\Omega)} 
= \infty.
\end{align*}
\end{thm}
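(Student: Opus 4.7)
I would argue by contradiction. Suppose that for some $p\in(N/2,N)$,
\[
M_p := \sup_{t\in(0,T_{\max})}\|u(\cdot,t)\|_{L^p(\Omega)}<\infty,
\]
and show this forces $\|u(\cdot,t)\|_{L^\infty(\Omega)}$ to remain bounded on $(0,T_{\max})$, which together with the standard extensibility criterion contradicts \eqref{blowupinfty}.

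\textbf{Step 1: regularity of $v$ from $u$.} Treating the second equation as $-\Delta v = \mu - u$ under homogeneous Neumann data with $\int_\Omega v\,dx=0$, standard elliptic $W^{2,p}$-regularity gives $\|v(\cdot,t)\|_{W^{2,p}(\Omega)}\leq C(M_p+\mu)$; since $p>N/2$, the Sobolev embedding $W^{2,p}\hookrightarrow W^{1,q_1}$ with $q_1:=Np/(N-p)>N$ yields $\sup_{t\in(0,T_{\max})}\|\nabla v(\cdot,t)\|_{L^{q_1}(\Omega)}\leq C$.

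\textbf{Step 2: bootstrap for $u$.} From $f(\xi)=k_f(1+\xi)^{-\alpha}$ one has the pointwise estimate $|uf(|\nabla v|^2)\nabla v|\leq k_f\,u\,|\nabla v|^{1-2\alpha}$, hence by H\"older
\[
\|uf(|\nabla v|^2)\nabla v\|_{L^s}\leq k_f\,\|u\|_{L^p}\,\|\nabla v\|_{L^{q_1}}^{1-2\alpha},\qquad \tfrac{1}{s}=\tfrac{1}{p}+\tfrac{1-2\alpha}{q_1}.
\]
Inserting this into the variation-of-constants formula
\[
u(t) = e^{t\Delta}u_0 - \int_0^t e^{(t-s)\Delta}\nabla\cdot\bigl(uf(|\nabla v|^2)\nabla v\bigr)(s)\,ds
\]
with the Neumann heat semigroup and applying the smoothing estimate $\|e^{\tau\Delta}\nabla\cdot w\|_{L^r}\leq C\tau^{-\frac{1}{2}-\frac{N}{2}(\frac{1}{s}-\frac{1}{r})}\|w\|_{L^s}$, whose integral over $(0,t)$ converges precisely when $\frac{1}{s}-\frac{1}{r}<\frac{1}{N}$, produces a uniform bound on $\|u(\cdot,t)\|_{L^{p_1}}$ for a strictly larger exponent $p_1>p$. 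Iterating yields $p_0=p<p_1<p_2<\cdots$; a direct computation for $y_n:=1/p_n$ gives the essentially linear recursion $y_{n+1}\approx 2(1-\alpha)(y_n-1/N)$, whose unstable fixed point $y^*=\tfrac{2(1-\alpha)}{N(1-2\alpha)}$ lies strictly above both $y_0=1/p$ and $1/N$. Since the slope $2(1-\alpha)$ exceeds $1$, the iterates $y_n$ fall monotonically away from $y^*$ and cross $1/N$ in finitely many steps. Once $p_n>N$, the embedding $W^{2,p_n}\hookrightarrow C^1$ gives $\nabla v\in L^\infty((0,T_{\max})\times\Omega)$, and one final semigroup application delivers $u\in L^\infty((0,T_{\max})\times\Omega)$, contradicting \eqref{blowupinfty}.

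\textbf{Main obstacle.} The delicate part is in Step~2: one must verify that each bootstrap round gives a quantitative gain that accumulates to drive $p_n$ above $N$ in a bounded number of iterations (depending only on $N$, $\alpha$ and $p$), while simultaneously controlling the multiplicative constants produced at each semigroup step. A more streamlined alternative is a direct $L^q$-energy estimate: testing the first equation of \eqref{sys1} against $u^{q-1}$ and using Young's inequality together with a Gagliardo--Nirenberg interpolation to dominate the drift integral $\int u^q|\nabla v|^{2(1-2\alpha)}$ yields an ordinary differential inequality for $t\mapsto\|u(\cdot,t)\|_{L^q}^q$ that can be closed under exactly the same hypothesis $p>N/2$.
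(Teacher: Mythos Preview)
Your proposal is correct and shares with the paper the contradiction set-up, the elliptic $W^{2,p}$ bound for $v$, and the variation-of-constants representation of $u$ through the Neumann heat semigroup. The divergence is in Step~2. You run a Moser-type bootstrap: pair $u$ with $|\nabla v|^{1-2\alpha}$ via H\"older at the level of the drift, feed the resulting $L^s$ bound into the semigroup smoothing to gain integrability $p\mapsto p_1>p$, then iterate finitely many times until $p_n>N$ and conclude. The paper instead reaches $L^\infty$ in a single shot: choosing $N<q<p^*=Np/(N-p)$, it splits $\|u\,|\nabla v|^{1-2\alpha}\|_{L^q}\le \|u\|_{L^{q/(2\alpha)}}\|\nabla v\|_{L^q}^{1-2\alpha}$ and then, crucially, interpolates $\|u\|_{L^{q/(2\alpha)}}\le \|u\|_{L^\infty}^{\theta}\|u\|_{L^1}^{1-\theta}$ with $\theta=1-2\alpha/q\in(0,1)$, using only mass conservation for the $L^1$ factor. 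Working on the shifted window $[t_0,t]$ with $t_0=\max\{0,t-1\}$ this yields
\[
\sup_{[0,T]}\|u\|_{L^\infty}\le C_1+C_7\Bigl(\sup_{[0,T]}\|u\|_{L^\infty}\Bigr)^{\theta},
\]
which closes immediately since $\theta<1$. Your iteration is perfectly sound---the recursion $y_{n+1}=2(1-\alpha)(y_n-1/N)$ does drive $y_n$ below $1/N$ in finitely many steps because $y_0=1/p<2/N<y^*$ and the slope exceeds~$1$---and the ``main obstacle'' you flag is really just bookkeeping, since the number of steps depends only on $(N,\alpha,p)$. The trade-off is that the paper's interpolation trick sidesteps the iteration entirely at the cost of a slightly less transparent H\"older split, while your approach is the more generic Moser-style argument that would also work if one did not notice that particular shortcut.
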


\vspace{4mm}

The second purpose of this paper is to study the behavior of the solutions of \eqref{sys1} near the blow-up time $T_{max}$.\\
Since it is not always possible to compute $T_{max}$, deriving a lower bound is a matter of great importance, in order to obtain a safe time interval of existence of the solution $(0, T)$ with $T<T_{max}$.\\
With this aim, we define for all $p>1$ the auxiliary function
\begin{equation}\label{Psi} 
\Psi(t):= \frac 1 {p} \|u(\cdot,t)\|^{p}_{L^{p}(\Omega)} \quad {\rm with}\quad  \Psi_0 := \Psi(0)= \frac 1 {p} \|u_0\|^{p}_{L^{p}(\Omega)}.
\end{equation}

\begin{thm}[Lower bound of blow-up time]\label{LB}

Let\/ $\Omega \equiv B_R(0) \subset\mathbb{R}^N$, $N\geq 3$, $R>0$ and let $\Psi$ be defined in \eqref{Psi}. 
Then, for all  $\frac N 2 <p<N$ and some positive constants $B_1, B_2, B_3, B_4$, 
the blow-up time $T_{max}$ for \eqref{sys1} with $f(\xi)= k_f(1+\xi)^{-\alpha}$ 
with some $k_f >0$, provided by Theorem~\ref{BULinfty}, 
satisfies the estimate
\begin{align}\label{lower Tmax in Lp}
T_{max}\geq T:= \int_{\Psi_0}^{\infty}\frac{d\eta}{B_1 \eta + B_2\eta^{\gamma_1} + B_3 \eta^{\gamma_2} +B_4 \eta^{\gamma_3}},
  \end{align}
with $\gamma_1:= \frac{p+1}{p}$, 
$\gamma_2:=\frac{2(p+1)-N} {2p-N}, \ \ \gamma_3:=\frac{2(p+1) - \frac{N(p+1)(1+\epsilon)}{p+1+\epsilon} }{2p-\frac{N(1+\epsilon)(p+1)}{p+1+\epsilon}} .$
\end{thm}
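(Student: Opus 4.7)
The plan is to derive a super-linear ODE inequality of the form $\Psi'(t)\leq B_1\Psi + B_2\Psi^{\gamma_1} + B_3\Psi^{\gamma_2} + B_4\Psi^{\gamma_3}$ on $(0,T_{max})$ and integrate it by separation of variables, invoking Theorem~\ref{BULp} to guarantee that $\Psi(t)\to\infty$ as $t\nearrow T_{max}$.

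First, differentiating $\Psi$ in time by testing the first equation of \eqref{sys1} against $u^{p-1}$ and integrating by parts (boundary terms vanish thanks to the Neumann condition) yields
\begin{equation*}
\Psi'(t) = -\frac{4(p-1)}{p^2}\int_\Omega \bigl|\nabla u^{p/2}\bigr|^2\,dx + (p-1)\int_\Omega u^{p-1} f(|\nabla v|^2)\,\nabla u\cdot\nabla v\,dx.
\end{equation*}
Since $f(\xi)=k_f(1+\xi)^{-\alpha}\leq k_f$, a Young inequality with a small parameter bounds the advective integral by $\tfrac{2(p-1)}{p^2}\int_\Omega|\nabla u^{p/2}|^2\,dx$ (which is absorbed by the diffusion) plus a multiple of $\int_\Omega u^p|\nabla v|^2\,dx$, so that
\begin{equation*}
\Psi'(t) \leq -\frac{2(p-1)}{p^2}\int_\Omega\bigl|\nabla u^{p/2}\bigr|^2\,dx + C\int_\Omega u^p|\nabla v|^2\,dx,
\end{equation*}
and the remaining task is to control $\int_\Omega u^p|\nabla v|^2$.

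Next, I would bound that quantity by a chain combining Hölder, elliptic regularity, Sobolev embedding, and Gagliardo--Nirenberg. The elliptic equation $-\Delta v=\mu-u$ with Neumann data and $\int_\Omega v=0$ gives $\|v\|_{W^{2,q}(\Omega)}\leq C(\|u\|_{L^q(\Omega)}+1)$; combined with the Sobolev embedding $W^{2,q}(\Omega)\hookrightarrow W^{1,Nq/(N-q)}(\Omega)$ valid for $q<N$, this controls $\|\nabla v\|_{L^r}$ by $\|u\|_{L^s}$ for appropriate $r,s$. Splitting $u^p|\nabla v|^2$ via Hölder with the conjugate pair $((p+1)/p,\,p+1)$ and applying the Gagliardo--Nirenberg inequality
\begin{equation*}
\|u^{p/2}\|_{L^\sigma(\Omega)}\leq C\bigl(\|\nabla u^{p/2}\|_{L^2(\Omega)}^\theta\|u^{p/2}\|_{L^2(\Omega)}^{1-\theta} + \|u^{p/2}\|_{L^2(\Omega)}\bigr)
\end{equation*}
produces, after a Young absorption of the gradient factor into the diffusion term (feasible because $p>N/2$ keeps the interpolation parameter strictly subcritical), the pure powers $\Psi^{\gamma_1}$ with $\gamma_1=(p+1)/p$ (from the additive part of Gagliardo--Nirenberg) and $\Psi^{\gamma_2}$ with $\gamma_2=(2(p+1)-N)/(2p-N)$ (from the multiplicative part). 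Repeating the same chain with the perturbed Hölder pair $\bigl(\tfrac{p+1+\epsilon}{p},\tfrac{p+1+\epsilon}{1+\epsilon}\bigr)$ and a parallel Gagliardo--Nirenberg/Young absorption then yields the summand $B_4\Psi^{\gamma_3}$ with the $\epsilon$-dependent exponent stated in the theorem; this perturbed chain is what allows the argument to reach the full range rather than a strict sub-range.

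Assembling the contributions yields $\Psi'(t)\leq B_1\Psi + B_2\Psi^{\gamma_1} + B_3\Psi^{\gamma_2} + B_4\Psi^{\gamma_3}$ on $(0,T_{max})$, whence separation of variables gives $t\leq \int_{\Psi_0}^{\Psi(t)}(B_1\eta + B_2\eta^{\gamma_1} + B_3\eta^{\gamma_2} + B_4\eta^{\gamma_3})^{-1}\,d\eta$; letting $t\nearrow T_{max}$ and using the divergence of $\Psi$ supplied by Theorem~\ref{BULp} produces \eqref{lower Tmax in Lp}. The main obstacle is the precise bookkeeping of exponents: the upper bound $p<N$ is exactly what makes the Sobolev embedding $W^{2,p}\hookrightarrow W^{1,Np/(N-p)}$ admissible so that $\nabla v$ can be controlled by $u$ in the required norms, while the lower bound $p>N/2$ is precisely what keeps the Gagliardo--Nirenberg interpolation parameter small enough that the resulting powers of $\|\nabla u^{p/2}\|_{L^2}$ stay strictly below $2$ and can be absorbed by Young's inequality into the diffusion term. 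The auxiliary parameter $\epsilon$ in $\gamma_3$ is the extra degree of freedom in the Hölder splitting that is needed to close the argument up to the threshold $p=N$.
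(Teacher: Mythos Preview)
Your route diverges from the paper's at the very first step after differentiating $\Psi$, and the divergence matters for the exponents you claim.

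The paper does \emph{not} crudely bound $f\le k_f$ and apply Young's inequality to produce $\int_\Omega u^p|\nabla v|^2$. Instead it integrates $\mathcal I_2$ by parts once more, obtaining
\[
\mathcal I_2=-\tfrac{p-1}{p}\int_\Omega u^p\,\Delta v\,f(|\nabla v|^2)\,dx-\tfrac{p-1}{p}\int_\Omega u^p f'(|\nabla v|^2)\,\nabla v\cdot\nabla(|\nabla v|^2)\,dx.
\]
The first integral, via $\Delta v=\mu-u$, gives $\int_\Omega u^{p+1}$ \emph{directly} (not to the power $p/(p+1)$), and Gagliardo--Nirenberg applied to $\int_\Omega u^{p+1}=\|u^{p/2}\|_{L^{2(p+1)/p}}^{2(p+1)/p}$ is exactly what yields $\gamma_1=(p+1)/p$ and $\gamma_2=(2(p+1)-N)/(2p-N)$. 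The second integral involves $D^2v$ and is handled by \emph{radial symmetry}: writing $v_{rr}=\mu/N-u+\tfrac{N-1}{r^N}\int_0^r\rho^{N-1}u\,d\rho$ and estimating the resulting one–dimensional integral by H\"older produces the factor $r^{Np/(p+1)-1}$, which forces a second H\"older step with the $\epsilon$-perturbed exponent so that $\int_0^R r^{\epsilon Np/(p+1)-1}\,dr$ converges. This is the origin of $\gamma_3$ and of the parameter $\epsilon$; it has nothing to do with Sobolev embedding or with ``reaching the threshold $p=N$.''

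Your scheme, by contrast, leads to $(\int_\Omega u^{p+1})^{p/(p+1)}\|\nabla v\|_{L^{2(p+1)}}^2$, and the second factor is itself controlled by a norm of $u$ through elliptic regularity. After Gagliardo--Nirenberg on both factors you get \emph{products} of powers of $\Psi$ and of $\|\nabla u^{p/2}\|_{L^2}$ whose exponents do not reduce to the stated $\gamma_2$, $\gamma_3$; your claim that the additive and multiplicative parts of one Gagliardo--Nirenberg application give exactly $\gamma_1$ and $\gamma_2$ ignores the $\|\nabla v\|^2$ factor still present. The assertion that ``repeating the same chain with the perturbed H\"older pair'' gives precisely $\gamma_3$ is not justified, and indeed the structure of $\gamma_3$ (which collapses to $\gamma_2$ when $\epsilon\to0$) reflects the radial weight computation, not a Sobolev embedding. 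Your approach may well yield \emph{some} differential inequality $\Psi'\le G(\Psi)$ and hence a lower bound on $T_{\max}$, but not the one stated in the theorem.
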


\vspace{4mm}

The scheme of this paper is the following: Section \ref{prel} is concerned with preliminaries including the Neumann heat semigroup, in Section \ref{blow up in L^p}, since the solution of \eqref{sys1} blows up in finite time in $L^{\infty}$-norm we prove that the solution  blows up also in $L^p$-norm (for some $p>1$). Section \ref{lower bound} is devoted to find appropriate assumptions on the data, such that the $\|u\|_{L^p(\Omega)}$ remains bounded in $(0,T)$ with $T< T_{max}$. Clearly this value of $T$ provides a lower bound for blow-up time $T_{max}$ of $u$.


\section{Preliminaries} \label{prel}
In this section, we present some preliminary lemmata which we shall use in the proof of our main results.

\begin{lem}[see \cite{W2020}] \label{LSE} 
Let $\Omega  \subset  \RN$, $N \ge1$  be a bounded domain with smooth boundary, and assume that $f$ and $u_0$ satisfy  \eqref{f} and \eqref{u_0}.
Then there exists $T_{max} \in(0, \infty]$ an a uniquely determined pair $(u,v)$ of functions 
\begin{align*}
    &u\in C^0(\overline{\Omega}\times[0,T_{max}))
           \cap C^{2,1}(\overline{\Omega}\times(0,T_{max})),\\[1.05mm]
    &v\in \bigcap_{q>N}L^{\infty}_{loc} ([0, T_{max}); W^{1,q}(\Omega))
           \cap C^{2,0}(\overline{\Omega}\times(0,T_{max})),
\end{align*}
with $u\geq 0$ and $v \geq 0$, in $\Omega \times (0, T_{max})$, such that $(u,v)$ solves \eqref{sys1} classically in $\Omega \times (0, T_{max})$, with
\begin{equation} \label{int u=int u0}
\int_{\Omega} u(\cdot, t)= \int_{\Omega} u_0 \quad for \ all \ t\in (0, T_{max})
\end{equation}
and
    \begin{align*} 
            {\it if}\ T_{max}<\infty,
    \quad 
            {\it then}\ \limsup_{t \nearrow T_{max}} 
                           \|u(\cdot,t)\|_{L^\infty(\Omega)}=\infty.
    \end{align*}
    Moreover, if $\Omega = B_{R}(0)$ with some $R>0$ and $u_0$ is radially simmetric with respect to $x=0$, then also $u(\cdot, t)$ and $v(\cdot, t)$ are radially symmetric for each $t\in (0,T_{max})$.
%
\end{lem}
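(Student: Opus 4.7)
The plan is to follow the standard contraction-mapping strategy for parabolic-elliptic Keller--Segel systems, essentially as carried out in \cite{W2020} for the same class of models. Because $f\in C^2([0,\infty))$ and $\nabla v$ will be controlled uniformly via elliptic regularity, the chemotactic flux $u\,f(|\nabla v|^2)\nabla v$ behaves like a smooth vector field times $u$ on any short time interval, which is what makes the fixed-point approach applicable and leaves only technical, rather than structural, difficulties to overcome.

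First I would set up the iteration. Fix $T>0$ small and $M>\|u_0\|_{L^\infty(\Omega)}$; on the closed set
\[
S_T := \bigl\{\tilde u \in C^0(\overline{\Omega}\times[0,T]) : \tilde u\geq 0,\ \|\tilde u\|_{L^\infty}\leq M,\ \tilde u(\cdot,0)=u_0\bigr\},
\]
define $v=v[\tilde u]$ as the unique solution of $\Delta v=\mu(t)-\tilde u$ with $\partial_\nu v|_{\partial\Omega}=0$ and $\int_\Omega v=0$, where $\mu(t):=|\Omega|^{-1}\int_\Omega \tilde u$. The compatibility condition holds by definition of $\mu$, and Agmon--Douglis--Nirenberg theory gives $\|v(\cdot,t)\|_{W^{2,q}}\leq C_q\|\tilde u(\cdot,t)\|_{L^q}$ for every $q\in(1,\infty)$, so picking $q>N$ and invoking Sobolev embedding places $\nabla v$ in $L^\infty([0,T];C^0(\overline\Omega))$. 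I then let $w=\Phi(\tilde u)$ be the classical solution of
\[
w_t=\Delta w-\nabla\cdot\bigl(w\,f(|\nabla v|^2)\nabla v\bigr), \quad \partial_\nu w|_{\partial\Omega}=0,\quad w(\cdot,0)=u_0,
\]
which is well defined by linear parabolic theory after upgrading the drift to a suitable H\"older class by elliptic bootstrap. Next I would check that $\Phi(S_T)\subset S_T$ and is a contraction for $T$ small: invariance follows from the weak maximum principle and an $L^\infty$-estimate continuous in $T$, while the elliptic bound $\|\nabla v_1-\nabla v_2\|_{L^\infty}\leq C\|\tilde u_1-\tilde u_2\|_{L^\infty}$ shows that $w_1-w_2$ satisfies a linear parabolic problem whose source and drift have size $\|\tilde u_1-\tilde u_2\|_{L^\infty}$, producing $\|\Phi(\tilde u_1)-\Phi(\tilde u_2)\|_{L^\infty}\leq CT^{\theta}\|\tilde u_1-\tilde u_2\|_{L^\infty}$ for some $\theta>0$. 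Banach's theorem then yields the unique local classical solution, and a standard continuation argument produces the maximal time $T_{max}\in(0,\infty]$. The extensibility alternative follows because, as long as $\|u(\cdot,\tau)\|_{L^\infty}$ stays bounded, the local existence time can be chosen uniformly bounded below, contradicting maximality unless $\limsup_{t\nearrow T_{max}}\|u(\cdot,t)\|_{L^\infty}=\infty$.

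The remaining properties are standard consequences. Mass conservation \eqref{int u=int u0} follows from integrating the $u$-equation over $\Omega$, the Neumann condition killing both the diffusive and the drift boundary fluxes. Nonnegativity of $u$ is the parabolic minimum principle applied to the linear equation $u_t=\Delta u-\nabla\cdot(uF)$ with $F:=f(|\nabla v|^2)\nabla v$, and the corresponding sign property of $v$ follows from elliptic maximum-principle arguments on the Poisson equation solved by $v$. For radial symmetry, fix any orthogonal $R\in O(N)$: since $B_R(0)$, the Neumann condition, and all differential operators involved are rotation-invariant, $(u(R\cdot,t),v(R\cdot,t))$ again solves the system with the same initial data, and uniqueness forces equality with $(u,v)$. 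The main technical obstacle is closing the contraction when $\tilde u$ is only continuous in time; the natural remedy is to work first in a parabolic H\"older space $C^{\beta,\beta/2}$ after a short mollification of $u_0$, and then relax to merely continuous initial data by approximation, exactly as in the Winkler-style constructions referenced above.
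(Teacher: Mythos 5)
The paper does not prove this lemma at all: it is imported verbatim from \cite{W2020} (Winkler's local existence result), so there is no in-paper argument to compare against. Your sketch is the standard route and is essentially what the cited reference does: elliptic regularity for $v[\tilde u]$, a Banach fixed point on a ball in $C^0(\overline{\Omega}\times[0,T])$, the extensibility alternative from a uniform lower bound on the local existence time, mass conservation by integration, nonnegativity of $u$ by the minimum principle, and radial symmetry from rotation invariance plus uniqueness. One stylistic difference worth noting: Winkler-type constructions close the fixed point through the Duhamel (mild) formulation with the Neumann heat semigroup and the smoothing estimates of Lemma \ref{Cao}, and only afterwards bootstrap to classical regularity; this avoids the mollification-and-approximation detour you propose for merely continuous $u_0$, which is workable but requires a separate uniqueness argument to identify the limits.

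There is, however, one step in your outline that is genuinely wrong as stated: you claim that $v\geq 0$ ``follows from elliptic maximum-principle arguments on the Poisson equation solved by $v$.'' Under the normalization $\int_\Omega v=0$ imposed in \eqref{sys1}, no maximum principle can yield $v\geq 0$; indeed $v\geq 0$ together with $\int_\Omega v=0$ forces $v\equiv 0$, and a nonconstant solution of $\Delta v=\mu-u$ with zero mean necessarily changes sign. (The sign assertion for $v$ in the lemma as transcribed here is itself incompatible with the stated normalization and appears to be a misquotation of \cite{W2020}; in any case the property is never used in the rest of the paper.) You should either drop the claim $v\geq 0$ or replace the normalization by one, such as solving $\Delta v=\mu-u$ with an additive constant chosen so that $\inf_\Omega v=0$, under which a sign condition actually makes sense. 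The remainder of your argument is sound.
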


We next give some properties of the Neumann heat semigroup 
which will be used later. 
For the proof, see \cite[Lemma 2.1]{Cao} and \cite[Lemma~1.3]{W-2010}.

\begin{lem} \label{Cao} 
Let $(e^{t \Delta})_{t\geq 0}$ be the Neumann heat semigroup in 
$\Omega$, and let $\mu_1 >0$ denote the first non zero eigenvalue of 
$-\Delta$ in $\Omega$ under Neumann boundary conditions. 
Then there exist $k_1,  k_2 >0$ which 
 depend only on $\Omega$ and have  
the following properties\/{\rm :}
\begin{enumerate}[label=(\roman*)]
\item if\/ $1 \leq q\leq {\rm p}\leq \infty$, then 
\begin{equation} \label{etDeltaz}
\|e^{t \Delta} z\|_{L^{{\rm p}}(\Omega)} \leq k_1t^{ - \frac N 2(\frac 1 q - \frac 1 {\rm p})} 
\|z\|_{L^q(\Omega)}, \ \ \forall \ t >0
\end{equation}
holds for all $z\in L^q(\Omega)$.
\item If\/ $1< q \leq {\rm p} \leq \infty$, then
\begin{equation} \label{etDelta nablaz}
\|e^{t \Delta} \nabla \cdot \textbf{z\,}\|_{L^{{\rm p}}(\Omega)} \leq k_2\big(1+ t^{-\frac 1 2 - \frac N 2(\frac 1 q - \frac 1 {\rm p})}\big) e^{-\mu_1 t} \|\textbf{z\,}\|_{L^q(\Omega)}, \ \ \forall \ t >0
\end{equation}
is valid for any $\textbf{z\,} \in (L^{q}(\Omega))^N$, 
where $e^{t \Delta} \nabla \cdot {}$ is the extension of the operator
$e^{t \Delta} \nabla \cdot {}$ on $(C_0^\infty(\Omega))^N$ to $(L^q(\Omega))^N$. 
\end{enumerate}
\end{lem}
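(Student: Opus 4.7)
The plan is to derive a super-linear differential inequality for $\Psi(t) = \frac{1}{p}\|u(\cdot,t)\|_{L^p(\Omega)}^p$ and then integrate it. I would first differentiate $\Psi$ in time, substitute the first equation of \eqref{sys1}, and integrate by parts (using the Neumann boundary data for $u$ and $v$ to kill the boundary contributions) to obtain
\[
\Psi'(t) \;=\; -\frac{4(p-1)}{p^{2}}\int_{\Omega}|\nabla u^{p/2}|^{2}\,dx \;+\; \frac{p-1}{p}\int_{\Omega} f(|\nabla v|^{2})\,\nabla u^{p}\cdot\nabla v\,dx.
\]
Using $f(\xi)=k_f(1+\xi)^{-\alpha}\le k_f$ and the identity $|\nabla u^{p}|=2u^{p/2}|\nabla u^{p/2}|$, Young's inequality rewrites the cross term as $\eta\int|\nabla u^{p/2}|^{2}\,dx + C_{\eta}\int u^{p}|\nabla v|^{2}\,dx$; choosing $\eta$ small absorbs the first piece into the diffusion.

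The bulk of the work lies in estimating $\int_{\Omega} u^{p}|\nabla v|^{2}\,dx$. I would split it by H\"older's inequality with a free splitting exponent (which will emerge as the $\epsilon$ in $\gamma_{3}$), then control $\nabla v$ through the Sobolev embedding $W^{1,s}\hookrightarrow L^{Ns/(N-s)}$ (valid for $s<N$, whence the hypothesis $p<N$) combined with the Calder\'on--Zygmund estimate $\|D^{2} v\|_{L^{s}}\le C\|u-\mu\|_{L^{s}}$ for the Poisson equation $\Delta v=\mu-u$ under Neumann data and $\int_{\Omega}v=0$. Mass conservation from Lemma \ref{LSE} then gives $\|u-\mu\|_{L^{s}}\le \|u\|_{L^{s}}+\mu|\Omega|^{1/s}$, whose constant piece produces the linear summand $B_{1}\Psi$. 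Next I would feed the resulting $\|u\|_{L^{r}}$-norms into the Gagliardo--Nirenberg inequality applied to $u^{p/2}$, interpolating between $\|\nabla u^{p/2}\|_{L^{2}}^{2/p}$ and $\|u\|_{L^{p}}=(p\Psi)^{1/p}$, and apply Young's inequality once more to absorb the gradient factors into the dissipation. Three different routes in this bookkeeping --- a direct power-mean estimate, a critical Gagliardo--Nirenberg route on $\int u^{p+1}$, and the $\epsilon$-parametrized H\"older split applied to $\int u^{p}|\nabla v|^{2}$ --- produce the exponents $\gamma_{1},\gamma_{2},\gamma_{3}$. The outcome is
\[
\Psi'(t)\;\le\; B_{1}\Psi(t)+B_{2}\Psi(t)^{\gamma_{1}}+B_{3}\Psi(t)^{\gamma_{2}}+B_{4}\Psi(t)^{\gamma_{3}},
\]
after which separation of variables, integrated from $0$ to any $t<T_{max}$ and passed to the limit $t\nearrow T_{max}$, yields \eqref{lower Tmax in Lp}.

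The main obstacle is the exponent accounting: one must verify that for every $N/2<p<N$ a suitable $\epsilon>0$ keeps all intermediate Sobolev and Gagliardo--Nirenberg indices admissible, and that the resulting $\gamma_{1},\gamma_{2},\gamma_{3}$ are strictly greater than $1$ so that the integrand in \eqref{lower Tmax in Lp} is integrable near $+\infty$ and therefore delivers a genuinely positive lower bound $T$. The restriction $p>N/2$ is precisely what keeps the absorption exponent in the final Young step below $2$, and simultaneously makes the denominators $2p-N$ and $2p-\frac{N(1+\epsilon)(p+1)}{p+1+\epsilon}$ positive for small $\epsilon$, ensuring $\gamma_{2}$ and $\gamma_{3}$ are well defined.
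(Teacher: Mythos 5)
Your proposal does not address the statement at all. The statement to be proved is Lemma \ref{Cao}, the $L^q$--$L^{\rm p}$ smoothing estimates for the Neumann heat semigroup: the bound \eqref{etDeltaz} for $e^{t\Delta}z$ and the bound \eqref{etDelta nablaz} for the extended operator $e^{t\Delta}\nabla\cdot{}$ acting on vector fields, with the decay rates $t^{-\frac N2(\frac1q-\frac1{\rm p})}$ and $t^{-\frac12-\frac N2(\frac1q-\frac1{\rm p})}e^{-\mu_1 t}$ respectively. What you have written is instead a proof sketch of Theorem \ref{LB}, the lower bound for the blow-up time, via a differential inequality for $\Psi(t)=\frac1p\|u(\cdot,t)\|_{L^p(\Omega)}^p$. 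Nothing in your argument touches the heat semigroup, the eigenvalue $\mu_1$, or the duality/interpolation arguments that such kernel estimates require; consequently there is no overlap whatsoever between your text and the claim.

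For the record, the paper itself does not prove Lemma \ref{Cao} either: it is quoted from the literature, with the proof attributed to \cite[Lemma 2.1]{Cao} and \cite[Lemma~1.3]{W-2010}. A genuine proof would proceed by pointwise Gaussian-type bounds on the Neumann heat kernel (or by spectral decomposition on the orthogonal complement of constants to produce the factor $e^{-\mu_1 t}$), followed by Young's convolution inequality to pass from $L^q$ to $L^{\rm p}$, and a duality argument to handle $e^{t\Delta}\nabla\cdot{}$ via the gradient estimate for $e^{t\Delta}$. If your intention was to prove Theorem \ref{LB}, your sketch is broadly consistent with the paper's Section \ref{lower bound} (the paper obtains the term $\int_\Omega u^p|\nabla v|$-type contributions by writing out the radially symmetric form of $\nabla v\cdot\nabla(|\nabla v|^2)$ and using $v_{rr}=\frac{\mu}{N}-u+\frac{N-1}{r^N}\int_0^r\rho^{N-1}u\,d\rho$ rather than a Calder\'on--Zygmund estimate), but it must be submitted against that theorem, not against Lemma \ref{Cao}.
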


 In Section \ref{lower bound} we will use the  Gagliardo--Nirenberg inequality in the following form. 

\begin{lem}\label{lemma GN ineq}
Let\/ $\Omega$ be a  bounded and smooth domain of\/ $\mathbb{R}^N$ with $N\geq 1$. 
Let $ \mathsf{r}\geq 1$, $0< \mathsf{q} \leq  \mathsf{p}\leq\infty$, $ \mathsf{s}>0$.
	Then there exists a constant
	$C_{{\rm GN}}>0$ such that 
\begin{equation} \label{GN ineq}	
		\|f\|^{\mathsf{p}}_{L^{ \mathsf{p}}(\Omega)}\leq C_{{\rm GN}} \Big(\|\nabla f\|^{\mathsf{p} a}_{L^{ \mathsf{r}}(\Omega)}
		\|f\|_{L^{ \mathsf{q}}(\Omega)}^{{\mathsf{p}}(1-a)}
	+\|f\|^{\mathsf{p}}_{L^{ \mathsf{s}}(\Omega)}\Big)
	\end{equation}
for all $f\in L^{\textsf{q}}({\Omega})$ with 
$\nabla f \in (L^{\textsf{r}}(\Omega))^N$
and
$a:=\frac{\frac{1}{ \mathsf{q}}-\frac{1}{ \mathsf{p}}}
		{\frac{1}{ \mathsf{q}}+\frac{1}{N}-
		\frac{1}{ \mathsf{r}}} \in [0,1]$. 	
\begin{proof}
Following from the Gagliardo--Nirenberg inequality 
(see \cite{Nir} for more details): 
\begin{equation*}\label{GNfirst} 	
		\|f\|^{ \mathsf{p}}_{L^{ \mathsf{p}}(\Omega)}\leq \Big[c_{{\rm GN}} \Big(\|\nabla f\|^{a}_{L^{ \mathsf{r}}(\Omega)}
		\|f\|_{L^{ \mathsf{q}}(\Omega)}^{1-a} +\|f\|_{L^{ \mathsf{s}}(\Omega)}\Big) \Big]^{ \mathsf{p}},
	\end{equation*}
with some $c_{{\rm GN}}>0$, and then from the inequality
\begin{equation*}
(\mathsf{a}+\mathsf{b})^{\mathsf{p}} \leq 2^{\mathsf{p}}(\mathsf{a}^{\mathsf{p}} + 
\mathsf{b}^{\mathsf{p}})\quad {\rm for\ any}\ \mathsf{a}, \mathsf{b}\geq 0,
\ \mathsf{p}>0,
\end{equation*}
we arrive to \eqref{GN ineq} with $C_{\rm GN}= 2^{\mathsf{p}} c_{\rm GN}^{\mathsf{p}}$.
\end{proof}
\end{lem}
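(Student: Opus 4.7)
The plan is to reduce the stated inequality to the classical Gagliardo--Nirenberg inequality due to Nirenberg, and then to dispose of the cross term that appears after raising to the power $\mathsf{p}$ via a cheap convexity-type bound. Concretely, I would first invoke the standard version
\begin{equation*}
\|f\|_{L^{\mathsf{p}}(\Omega)} \le c_{\rm GN}\Bigl(\|\nabla f\|_{L^{\mathsf{r}}(\Omega)}^{a}\,\|f\|_{L^{\mathsf{q}}(\Omega)}^{1-a} + \|f\|_{L^{\mathsf{s}}(\Omega)}\Bigr),
\end{equation*}
valid on bounded smooth domains under the hypotheses $\mathsf{r}\ge 1$, $0<\mathsf{q}\le \mathsf{p}\le\infty$, $\mathsf{s}>0$ and the scaling identity for $a$ given in the statement (here I would simply cite Nirenberg's original work, already referenced in the paper).

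Next, I would raise this bound to the $\mathsf{p}$-th power. The only obstacle is to convert the sum inside the parentheses into a sum \emph{outside} after raising to the power. For this I would use the elementary inequality
\begin{equation*}
(\mathsf{a}+\mathsf{b})^{\mathsf{p}} \le 2^{\mathsf{p}}\bigl(\mathsf{a}^{\mathsf{p}}+\mathsf{b}^{\mathsf{p}}\bigr),\qquad \mathsf{a},\mathsf{b}\ge 0,\ \mathsf{p}>0,
\end{equation*}
which follows from $\max\{\mathsf{a},\mathsf{b}\}\le \mathsf{a}+\mathsf{b}\le 2\max\{\mathsf{a},\mathsf{b}\}$. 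Applying this with $\mathsf{a}=\|\nabla f\|_{L^{\mathsf{r}}}^{a}\|f\|_{L^{\mathsf{q}}}^{1-a}$ and $\mathsf{b}=\|f\|_{L^{\mathsf{s}}}$ converts the bound into
\begin{equation*}
\|f\|_{L^{\mathsf{p}}(\Omega)}^{\mathsf{p}} \le 2^{\mathsf{p}} c_{\rm GN}^{\mathsf{p}}\Bigl(\|\nabla f\|_{L^{\mathsf{r}}(\Omega)}^{\mathsf{p}a}\,\|f\|_{L^{\mathsf{q}}(\Omega)}^{\mathsf{p}(1-a)} + \|f\|_{L^{\mathsf{s}}(\Omega)}^{\mathsf{p}}\Bigr),
\end{equation*}
which is exactly \eqref{GN ineq} with the explicit constant $C_{\rm GN}:=2^{\mathsf{p}}\,c_{\rm GN}^{\mathsf{p}}$.

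There is essentially no technical obstacle here: the admissibility of the exponents and of the interpolation parameter $a\in[0,1]$ is inherited verbatim from Nirenberg's theorem, and the passage from the ``Nirenberg form'' to the ``polynomial form'' is purely algebraic. The only points I would flag for clarity are (i) that the case $\mathsf{p}=\infty$ would require the obvious convention that the displayed $\mathsf{p}$-th powers are replaced by the corresponding essential suprema, and (ii) that when $a=0$ or $a=1$ one of the factors degenerates but the bound remains meaningful. Apart from these bookkeeping remarks, the proof is a one-line consequence of the classical inequality combined with the subadditivity bound above.
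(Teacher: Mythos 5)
Your proposal is correct and follows essentially the same route as the paper: cite the classical Gagliardo--Nirenberg inequality in Nirenberg's form, raise to the power $\mathsf{p}$, and split the sum via $(\mathsf{a}+\mathsf{b})^{\mathsf{p}}\leq 2^{\mathsf{p}}(\mathsf{a}^{\mathsf{p}}+\mathsf{b}^{\mathsf{p}})$, arriving at the identical constant $C_{\rm GN}=2^{\mathsf{p}}c_{\rm GN}^{\mathsf{p}}$. Your added remarks on the degenerate cases $\mathsf{p}=\infty$ and $a\in\{0,1\}$ are sensible bookkeeping but do not change the argument.
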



\section{Blow-up in $L^{p}$-norm}\label{blow up in L^p}
The aim of this section is to prove Theorem \ref{BULp}. To this end, first we prove the following lemma.
\begin{lem}\label{LemmaBoundedness u nabla v} 
Let\/ $\Omega \subset\mathbb{R}^N,\ N\geq 3$ be a bounded and smooth domain. 
Let $(u,v)$ be a classical solution of system \eqref{sys1} with $f(\xi)= k_f(1+\xi)^{-\alpha}$ with some $k_f >0$. 
If $\alpha$ satisfies \eqref{alpha} and if for some 
$\frac{N}{2} <p<N $
there exists 
$C>0$
such that 
\begin{align*}
\left\|u(\cdot,t)\right\|_{
L^{p}(\Omega)} 
\leq C, \quad \textrm{ for any } t \in (0,T_{max}),
\end{align*}
then, for some $\hat C>0$, 
\begin{align}\label{BoundednessU^infty}
\left\|u(\cdot,t)\right\|_{L^{\infty}(\Omega)} 
\leq {\hat C}, \quad \textrm{ for any } t \in (0,T_{max}).
\end{align}
\end{lem}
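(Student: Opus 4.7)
The plan is to combine elliptic regularity for $v$ with the smoothing estimates of Lemma~\ref{Cao} so as to close a self-bounding inequality for
\begin{equation*}
M(T):=\sup_{t\in(0,T)}\|u(\cdot,t)\|_{L^\infty(\Omega)},\qquad T<T_{max}.
\end{equation*}
First I would exploit the elliptic identity $\Delta v=\mu-u$ with Neumann data and $\int_{\Omega}v=0$: standard $L^p$ elliptic regularity combined with the hypothesis $\|u(\cdot,t)\|_{L^p}\leq C$ yields $\|v(\cdot,t)\|_{W^{2,p}(\Omega)}\leq \widetilde C$, and since $\tfrac{N}{2}<p<N$ the Sobolev embedding $W^{2,p}\hookrightarrow W^{1,s}$ with $s:=\tfrac{Np}{N-p}>N$ produces a uniform bound $\|\nabla v(\cdot,t)\|_{L^s(\Omega)}\leq C_1$ on $(0,T_{max})$. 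Moreover the explicit form $f(\xi)=k_f(1+\xi)^{-\alpha}$ implies the pointwise inequality $f(|\nabla v|^2)|\nabla v|\leq k_f(1+|\nabla v|^{1-2\alpha})$, so the convective flux satisfies $|u f(|\nabla v|^2)\nabla v|\leq k_f u(1+|\nabla v|^{1-2\alpha})$.

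Then I would use the variation-of-constants representation
\begin{equation*}
u(\cdot,t)=e^{t\Delta}u_0-\int_0^t e^{(t-s)\Delta}\nabla\cdot\bigl(uf(|\nabla v|^2)\nabla v\bigr)(\cdot,s)\,ds
\end{equation*}
and apply Lemma~\ref{Cao}(ii) with an exponent $q$ chosen just above $N$, so that the kernel $(1+(t-s)^{-\frac{1}{2}-\frac{N}{2q}})e^{-\mu_1(t-s)}$ is integrable in $s$. The $L^q$-norm of the flux would be handled by H\"older with exponents $q_1,q_2$ satisfying $\tfrac{1}{q_1}+\tfrac{1}{q_2}=\tfrac{1}{q}$ and $q_2=\tfrac{s}{1-2\alpha}$, giving
\begin{equation*}
\|u|\nabla v|^{1-2\alpha}\|_{L^q}\leq\|u\|_{L^{q_1}}\|\nabla v\|_{L^s}^{1-2\alpha}\leq C_1^{1-2\alpha}\|u\|_{L^{q_1}}.
\end{equation*}
The interpolation $\|u\|_{L^{q_1}}\leq\|u\|_{L^p}^{p/q_1}\|u\|_{L^\infty}^{1-p/q_1}$ (together with an analogous estimate for the contribution of the $1$-term) would then let me replace everything by powers of $M(T)$ strictly smaller than one. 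Since $\|u\|_{L^p}\leq C$ and $M(T)<\infty$ for each $T<T_{max}$ by the continuity of $u$, an inequality of the form
\begin{equation*}
M(T)\leq C_2+C_3\bigl(M(T)^{1-p/q_1}+M(T)^{1-p/q}\bigr)
\end{equation*}
can be absorbed into the left-hand side, yielding $M(T)\leq\hat C$ uniformly in $T<T_{max}$, which is the claimed bound.

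The main technical obstacle is the compatibility of the three requirements on $q,q_1,q_2$: time-integrability of the heat-kernel factor asks for $q>N$; finiteness of $q_1$ asks for $\tfrac{1}{q}>\tfrac{1-2\alpha}{s}$; and admissibility of the interpolation asks for $q_1\geq p$. A short computation shows that all three can be met simultaneously under $\tfrac{N}{2}<p<N$ (with $0<\alpha<\tfrac{N-2}{2(N-1)}$ providing extra slack), so that pinning down $q$ and verifying these algebraic inequalities is the only delicate point in an otherwise standard semigroup bootstrap.
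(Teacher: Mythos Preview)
Your proposal is correct and follows essentially the same strategy as the paper: a variation-of-constants formula, the heat-semigroup smoothing estimate of Lemma~\ref{Cao}(ii), elliptic $W^{2,p}$-regularity for $v$ combined with the Sobolev embedding $W^{2,p}\hookrightarrow W^{1,p^*}$, the pointwise bound $f(|\nabla v|^2)|\nabla v|\lesssim |\nabla v|^{1-2\alpha}$, H\"older's inequality, and an interpolation yielding a sub-linear self-bounding inequality for $M(T)$. The paper makes slightly different bookkeeping choices---it restarts the Duhamel formula at $t_0=\max\{0,t-1\}$, uses the simpler bound $|\nabla v|/(1+|\nabla v|^2)^\alpha\le |\nabla v|^{1-2\alpha}$ without the extra ``$1$''-term, and interpolates $u$ between $L^1$ and $L^\infty$ via mass conservation rather than between $L^p$ and $L^\infty$---but the argument is otherwise identical.
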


\begin{proof}
For any $t\in (0,T_{max})$, we set $t_0 := \max \{0, t-1\}$ and we consider the representation formula for $u$:
\begin{align*}
& u(\cdot ,t) = e^{(t-t_0)\Delta} u( \cdot, t_0) - k_f\int_{t_0}^{t} e^{(t-s)\Delta} \nabla \cdot \Big(u (\cdot, s)\frac{\nabla v(\cdot,s)}{(1+ |\nabla v(\cdot , t)|^2)^{\alpha}}\Big)\,ds\notag\\
&=: u_1(\cdot,t)+u_2(\cdot,t)
\end{align*}
and
\begin{align} \label{unorm}
 \| u(\cdot ,t) \|_{L^{\infty}} \leq \| u_1(\cdot,t) \|_{L^{\infty}(\Omega)} +\|u_2(\cdot,t)\|_{L^{\infty}(\Omega)}.
\end{align}
We have
\begin{equation}\label{u_1}
\begin{split}                                                                                                                                                                                                                                                                                                                                                                                                                                                                                                                                                                                                                                                                                                                                                                                                                                                                                                                                                                                                                                                                                                                                                                                                                                                                                                                                                                                                                                                                                                                                                                                                                                                                                                                                                                                                                                                                                                                                                                                                                                                                                                                                                                                                                                                                                                                                                                                                                                                                                                                                                                                                                                                                                                                                                                                                                                                                                                                                                                                                                                                                     \lVert u_1 (\cdot,t) \rVert _{L^{\infty}(\Omega)} \leq \max \{\lVert u_0 \rVert_{L^{\infty}(\Omega)}, \mu |\Omega|k_1\} =:C_1,
\end{split}
\end{equation}
with $k_1>0$ and $\mu$ defined in \eqref{u_0}. 
In fact, if $t\leq 1$, then $t_0=0$ and hence the maximum principle yields $u_1(\cdot, t) \leq \| u_0\|_{L^{\infty}(\Omega)}$. 
If $t>1$, then $t-t_0=1$ and from \eqref{int u=int u0} and \eqref{etDeltaz} with ${\rm p}=\infty$ and $q=1$, we deduce that $\lVert u_1(\cdot,t)\rVert_{L^{\infty}(\Omega)} \leq k_1 (t-t_0)^{-\frac N 2} \lVert u(\cdot,t_0) \rVert_{L^1(\Omega)} \leq \mu |\Omega|k_1$.\\
\vskip.2cm
We next use \eqref{etDelta nablaz} with ${\rm p}=\infty$, which leads to
\begin{align}\label{u_2} 
& \|u_2 (\cdot, t) \|_{L^{\infty}(\Omega)} \\
\notag & \leq k_2 k_f  \int_{t_0}^{t} ( 1 + (t-s)^{-\frac 1 2 - \frac{N}{2q} }) e^{-\mu_1  (t-s)} \|u (\cdot, s)\frac{ \nabla v(\cdot,s)}{(1+|\nabla v|^2)^{\alpha}}  \|_{L^q(\Omega)} \,ds \\
\notag &\leq k \int_{t_0}^{t} ( 1 + (t-s)^{-\frac 1 2 - \frac{N}{2q} }) e^{-\mu_1  (t-s)} \|u (\cdot, s) |\nabla v|^{1- 2\alpha}  \|_{L^q(\Omega)} \,ds,
\end{align}
with $k:=k_2 k_f$ and $\frac {|\nabla v|}{(1+|\nabla v|^2)^{\alpha}}\leq |\nabla v|^{1-2\alpha}$.\\

Here, we may assume that 
$\frac N 2<p<N$,
and then 
we can fix $N< q < \frac{N p}{N-p}=p^*$. Since $2 \alpha < 1$, by H$\ddot{{\rm o}}$lder's inequality, we can estimate the last term in \eqref{u_2} as
\begin{align*}
& \|u (\cdot, s)|\nabla v(\cdot,s)|^{1-2\alpha}  \|_{L^q(\Omega)}\\
&\le 
\|u (\cdot, s)\|_{L^{\frac{q}{2\alpha}}(\Omega)}\|\nabla v(\cdot,s)  \|^{1-2\alpha}_{L^q(\Omega)}\\
&\le C_2\|u (\cdot, s)\|_{L^{\frac q {2\alpha}}(\Omega)} \|\nabla v(\cdot,s)\|^{1-2\alpha}_{L^{p^*}(\Omega)}\quad 
{\rm for\ all}\ s \in (0, \Tmax),
\end{align*}
for some $C_2>0$. 
The Sobolev embedding theorem and elliptic regularity theory 
applied to the second equation in \eqref{sys1} tell us that 
$\|v(\cdot,s)\|_{W^{1,p^*}(\Omega)}
\leq C_3\|v(\cdot,s)\|_{W^{2,p}(\Omega)}
\le C_4$ with some $C_3, C_4>0$. 
Thus again by H$\ddot{{\rm o}}$lder's inequality, the definition of $\mu = \frac 1 {|\Omega|}\int_{\Omega} u_0 dx $ and interpolation's inequality, we obtain
\begin{align*}
&\|u (\cdot, s)|\nabla v(\cdot,s)|^{1-2\alpha}  \|_{L^q(\Omega)} \\ 
&\le 
C_5\|u (\cdot, s)\|_{L^{\frac q{2\alpha}}(\Omega)}\\
&\le C_{5}\|u (\cdot, s)\|^{\theta}_{L^{\infty}(\Omega)} \|u (\cdot, s)\|^{1-\theta}_{L^1(\Omega)}\\
&\le C_{6}\|u (\cdot, s)\|^{\theta}_{L^{\infty}(\Omega)}\quad 
{\rm for\ all}\ s \in (0, \Tmax),
\end{align*}
with $\theta := 1 - \frac{2\alpha}{q} \in (0,1)$, $C_5:=C_2C_4$ and $C_{6}:=C_5(\mu |\Omega|)^{1-\theta}$. 
Hence, combining this estimate and \eqref{u_2}, we infer
\begin{align*}
\|u_2 (\cdot, t) \|_{L^{\infty}(\Omega)}
\leq C_{6}k_2  \int_{t_0}^{t} ( 1 +  (t-s)^{-\frac 1 2 - \frac{N}{2q} }) e^{-\mu_1  (t-s)} \|u (\cdot, s)\|_{L^{\infty}(\Omega)}^{ \theta}\,ds.
\end{align*}
Now fix any $T \in (0, T_{max})$. 
Then, since $t-t_0\leq 1$, we have
\begin{align}\label{u_21}
\|u_2 (\cdot, t) \|_{L^{\infty}(\Omega)}
&\leq C_{6}k_2   \int_{t_0}^{t} ( 1 +  (t-s)^{-\frac 1 2 - \frac{N}{2q} } e^{-\mu_1  (t-s)}) \,ds \cdot \sup_{t \in [0, T]} \|u (\cdot, t)\|_{L^{\infty}(\Omega)}^{ \theta}\notag\\[6pt]
&\leq C_{7}\sup_{t \in [0, T]} \|u (\cdot, t)\|_{L^{\infty}(\Omega)}^{ \theta},
\end{align}

where $C_{7}:=C_{6}k_2(1+\mu_1^{\frac{N}{2q}-\frac{1}{2}}\int_0^\infty r^{-\frac 1 2 - \frac{N}{2q}} e^{-r}\,dr)>0$ is finite, 
because $\frac{1}{2}+\frac{N}{2q}<1$  (i.e., $q>N$). \\
\vskip.2cm
Plugging \eqref{u_1} and \eqref{u_21},
into \eqref{unorm},	we see that
\begin{align*}
\| u(\cdot, t) \|_{L^{\infty}} \le C_1+C_{7} \sup_{t \in [0, T]} \|u (\cdot, t)\|_{L^{\infty}(\Omega)}^{ \theta},
\end{align*}
which implies
\begin{align*}
\sup_{t \in [0, T]}\|u(\cdot, t)\|_{L^\infty(\Omega)} 
&\le C_{1}+C_{7} \Big(\sup_{t \in [0, T]} \|u (\cdot, t)\|_{L^{\infty}(\Omega)}\Big)^{\theta}\quad {\rm for\ all}\ T \in (0, T_{max}).
\end{align*} 
From this inequality with $\theta \in (0,1)$, we arrive at \eqref{BoundednessU^infty}.
\end{proof}

\begin{prth1.2}
\rm{Since Theorem~\ref{BULinfty} holds, the unique local classical solution of \eqref{sys1} blows up at $t=T_{max}$ in the sense of \eqref{blowupinfty}, that is,  
$\limsup_{t \nearrow \Tmax}\| u(\cdot , t) \|_{L^{\infty}(\Omega)}= \infty$.
We prove that it blows up also in $L^p$-norm by contradiction.\\ 
In fact, if one supposes that there exist $p> \frac {N}{2} $ and $C>0$ 
such that 
\begin{align*}
\|u(\cdot, t)\|_{L^{p}(\Omega)} \leq C,\quad 
{\rm for\ all}\ t \in (0, T_{max}),
\end{align*} 
then, from Lemma \ref{LemmaBoundedness u nabla v}, it would exist 
$\hat C>0$ such that 
\begin{equation*} 
\lVert u(\cdot,t)\rVert _{L^\infty(\Omega)}\leq \hat C,\quad
{\rm for\ all}\ t \in (0, T_{max}),
\end{equation*}
which contradics \eqref{blowupinfty}. Thus, 
if $u$ blows up in $L^{\infty}$-norm, 
then $u$ blows up also in $L^p$-norm 
for all $p>\frac{N}{2}$}. \qed
\end{prth1.2}


\section{Lower bound of the blow-up time $\Tmax$} \label{lower bound}
Throughout this section we assume that Theorem \ref{BULp} holds.\\

We want to obtain a safe interval of existence of the solution of \eqref{sys1}  $[0,T]$, with $T$ a lower bound of the blow-up time $T_{max}$. To this end, first we construct a first order differential inequality for $\Psi$ defined in \eqref{Psi} and by integration we get the lower bound.

\begin{prth1.3} 
\rm{By differentiating  \eqref{Psi} we have
\begin{align}
 \label{Psi'}
&\Psi'(t)= \int_\Omega u^{p -1} \Delta u \,dx
- \int_\Omega u^{p -1}\nabla\cdot (u \nabla v f(|\nabla v|^2 ) \, dx\\
& \notag=:\mathcal  I_1+  \mathcal I_2,
\end{align}
with
\begin{align}
 \label{I1}
&\mathcal  I_1 = \int_\Omega u^{p -1} \Delta u \,dx\\ 
\notag&=  \int_\Omega \nabla \cdot\big( u^{p -1}\nabla u\big) dx - (p-1) \int_{\Omega}  u^{p-2} | \nabla u|^2 dx\\ 
\notag&  =  - \frac{4(p-1)}{p^2} \int_{\Omega}  | \nabla u^{\frac p 2}|^2 dx.
\end{align}
In the second term of \eqref{Psi'}, integrating by parts and using the boundary conditions in \eqref{sys1}, $\forall \ t\in [0,T_{max})$  we obtain
\begin{align}\label{I2}
&\mathcal  I_2 =- \int_{\Omega} u^{p -1}\nabla\cdot (u \nabla v f(|\nabla v|^2 ) \, dx \\
\notag&= (p-1) \int_{\Omega}  f(|\nabla v|^2)    u^{p -1} \nabla u \nabla vdx\\
\notag&=\frac{p-1}{p}  \int_{\Omega} \nabla u^p \cdot \nabla v f (|\nabla v|^2) dx \\
\notag&= -\frac{p-1}{p} \int_{\Omega} u^p \nabla \cdot[ \nabla v f (|\nabla v|^2)] dx\\
\notag&= -\frac{p-1}{p} \int_{\Omega} u^p [\Delta v  f (|\nabla v|^2)] dx\\
\notag&- \frac{p-1}{p} \int_{\Omega} u^p f'(|\nabla v|^2) \nabla v \cdot \nabla (|\nabla v|^2) dx.
\end{align}
Using the second equation of \eqref{sys1} and taking into account that $f(\xi)= k_f(1+\xi)^{-\alpha}$, $  f'(\xi )=  -\alpha k_f(1+\xi)^{-\alpha-1}$ in \eqref{I2}, we have
\begin{align}
 \label{I2 bis}
&\mathcal I_2  = - k_f\frac{p-1}{p} \int_{\Omega} u^p \frac{\mu - u}{(1+ |\nabla v|^2)^{\alpha}}   dx\\
 \notag& + \alpha k_f \frac{p-1}{p} \int_{\Omega}u^p \frac{ \nabla v \cdot \nabla (|\nabla v|^2) }{(1 + |\nabla v|^2)^{\alpha + 1}}dx\\ 
\notag & \leq k_f\frac{p-1}{p} \int_{\Omega}  u^{p +1}  dx  +  \alpha k_f  \frac{p-1}{p} \int_{\Omega}  u^{p} \frac{\nabla v \cdot \nabla (|\nabla v|^2)}{(1+ |\nabla v|^2)^{\alpha +1}} dx,
\end{align}
where we dropped the negative term $- k_f\frac{p-1}{p} \int_{\Omega} u^p \frac{\mu }{(1+ |\nabla v|^2)^{\alpha}}   dx$ and used the inequality $\frac{1}{(1+ |\nabla v|^2)^{\alpha} }\leq 1$ as $\alpha>0$. \\
In order to estimate the second term of \eqref{I2 bis} we recall the radially symmetric setting to obtain
\begin{align*}
&\int_{\Omega}  u^{p} \frac{\nabla v \cdot \nabla (|\nabla v|^2)}{(1+ |\nabla v|^2)^{\alpha +1}} dx= \omega_N \int_0^R u^p \frac{Nv_r(v^2_r)_r}{(1+ v^2_r)^{\alpha +1}} r^{N-1} dr\\
& =2N\omega_N \int_0^R u^p \frac{v^2_r v_{rr}}{(1+ v^2_r)^{\alpha +1}} r^{N-1} dr,
\end{align*}
which together with $v_{rr}= \frac{\mu}{N} - u + \frac{N-1} {r^N} \int_0^r \rho^{N-1} u \ d  \rho $ implies
\begin{align}
 \label{I2 4}
&\int_{\Omega}  u^{p} \frac{\nabla v \cdot \nabla (|\nabla v|^2)}{(1+ |\nabla v|^2)^{\alpha +1}} dx\\
\notag&= 2\mu \omega_N \int_0^R u^p \frac{v^2_r }{(1+ v^2_r)^{\alpha +1}} r^{N-1} dr\\
\notag& - 2N\omega_N \int_0^R u^{p+1}  \frac{v^2_r}{(1+ v^2_r)^{\alpha +1}} r^{N-1} dr \\
\notag &  + 2N(N-1) \omega_N \int_0^R u^p \frac{v^2_r }{(1+ v^2_r)^{\alpha +1}}  \frac 1 r \Big(\int_0^r \rho^{N-1} u d \rho \Big) dr \\
\notag& \leq 2\mu \omega_N \int_0^R u^p r^{N-1} dr  + 2N(N-1) \omega_N \int_0^R u^p  \frac 1 r \Big(\int_0^r \rho^{N-1} u d \rho \Big) dr,
\end{align}
where we dropped the negative term $- 2N\omega_N \int_0^R u^{p+1}  \frac{v^2_r}{(1+ v^2_r)^{\alpha +1}} r^{N-1} dr$ and used the inequality $\frac{v^2_r}{(1+ v^2_r)^{\alpha + 1} }\leq 1.$ \\
In the second term of \eqref {I2 4}, H$\ddot{{\rm o}}$lder's inequality yelds that for all $\epsilon >0$ there exists $c= c(\epsilon, N, p)$ such that
\begin{align}
 \label{I2 5}
& \omega_N \int_0^R u^p  \frac 1 r  \Big(\int_0^r \rho^{N-1} u d \rho \Big) dr \\
 \notag& \leq  \omega_N \int_0^R u^p  \frac 1 r  \Big(\int_0^r \rho^{N-1} d \rho \Big)^{\frac {p}{p+1}}\Big( \int_0^r u^{p+1} \rho^{N-1} d \rho\Big)^{\frac 1{p+1}} dr\\
 \notag& \leq \Big(\frac 1 {N}\Big)^{\frac p {p+1}} \Big(\int_{\Omega} u^{p+1} dx \Big)^{\frac 1 {p+1}} \omega^{\frac p {p+1}}_N \int_0^R u^p r^{\frac {Np}{p+1}-1} dr \\
 \notag& \leq \Big( \! \frac {1} N \! \Big)^{\frac p{p+1}} \! \Big( \! \int_{\Omega} u^{p+1} dx \! \Big)^{ \frac 1 {p+1}} \omega_N^{\frac p {p+1}}\! \Big(\! \int_0^R u^{p+1+\epsilon} r^{N-1} dr\!  \Big)^{\frac p {p+1+\epsilon}} \Big(\! \int_0^R  r^{\frac{\epsilon Np}{ p+1} - 1} dr\!  \Big)^{\frac{1+\epsilon}{p+1+\epsilon}}\\
\notag& = c \Big( \int_{\Omega} u^{p+1} dx\!  \Big)^{\frac 1 {p+1} } \Big(\int_{\Omega} u^{p+1+\epsilon} dx\!\Big)^{\frac{p}{p+1+ \epsilon}}.
\end{align}
Combining \eqref{I2 5} and \eqref{I2 4} with \eqref{I2 bis} we obtain
\begin{align}
 \label{I2 6}
 & \mathcal I_2  \leq 2 \alpha \mu k_f\frac{p-1}{p}   \int_{\Omega} u^p dx  + k_f \frac{p-1}{p} \int_{\Omega}  u^{p +1}  dx \\
 \notag&+ 2 \alpha N(N-1) c k_f \;  \frac{p-1}{p} \Big( \int_{\Omega} u^{p+1} dx \Big)^{\frac 1 {p+1}}\Big( \int_{\Omega} u^{p+1+\epsilon} dx\Big)^{\frac {p}{p+1+\epsilon}} \\
 \notag& \leq c_1  \int_{\Omega} u^p dx  + c_2  \int_{\Omega} u^{p +1}  dx + c_3\Big( \int_{\Omega} u^{p+1+\epsilon} dx\Big)^{\frac {p+1}{p+1+\epsilon}}
 \end{align}
where, in the last term, we used Young's inequality 
 with $c_1 = 2 \alpha \mu k_f\frac{p-1}{p}  , \ \  c_2 =  k_f\frac{p-1}{p}+ 2 \alpha N(N-1) c k_f\,  \frac{p-1}{p(p+1)}, \ \ c_3= 2 \alpha N(N-1) c  k_f\;  \frac{p-1}{p+1} $. \\
Thanks to the Gagliardo--Nirenberg inequality \eqref{GN ineq}, with $\mathsf{p}= 2\frac{p+1}{p}, \   \mathsf{r} = \mathsf{q}= \mathsf{s}=2, \ a= \theta_0 := \frac {N}{2(p+1)} \in(0,1)$ for all $p> \frac N 2$, we see that
\begin{align}
 \label{u^p+1}
 & \int_{\Omega} u^{p+1} dx = \| u^{\frac p 2}\|_{L^{2\frac{p+1}{p}}(\Omega)}^{2\frac{p+1}{p}} \\
&  \notag  \leq C_{GN} \| \nabla u^{\frac p 2} \|_{L^2(\Omega)}^{ 2\frac{p+1}{p}\theta_0}  \| u^{\frac p 2}\|_{L^2(\Omega)}^{ 2\frac{p+1}{p}(1-\theta_0)}+ C_{GN}   \| u^{\frac p 2}\|_{L^2(\Omega)}^{ 2\frac{p+1}{p}}\\
&  \notag = C_{GN}  \Big( \int_{\Omega} |\nabla u^{\frac p 2}|^2 dx\Big) ^{ \frac{N}{2p}}  \Big(  \int_{\Omega} u^{ p} dx\Big) ^{ \frac{2(p+1) - N}{2p}}+  C_{GN}   \Big(\int_{\Omega} u^{p} dx\Big)^{\frac{p+1}p}.
  \end{align}
Applying Young's inequality at the first term of \eqref{u^p+1} we have
\begin{align}
 \label{u^p+1 bis}
 &\int_{\Omega} u^{p+1} dx  \leq  \frac{N}{2p} \epsilon_1 C_{GN}  \int_{\Omega}  |\nabla u^{\frac p 2}|^2  dx\\
 & \notag + C_{GN}  \frac{2p-N}{2p \epsilon_1^{\frac N{2p-N}}} \Big(\int_{\Omega}  u^p dx\Big)^{\frac {2(p+1)-N}{2p-N}} + C_{GN}  \Big(\int_{\Omega} u^{p} dx\Big)^{\frac{p+1}p}
 \end{align}
with $\epsilon_1 >0$ to be choose later on,  and also
\begin{align}\label{u^p+1+ epsilon}
&\Big(\int_{\Omega} u^{p+1+\epsilon} dx \Big)^{\frac {p+1}{p+1+\epsilon}} = \| u^{\frac p 2}\|^{2\frac{p+1}p}_{L^2\frac{p+1+\epsilon}{p}(\Omega)} \\
& \notag \leq  C_{GN} \| \nabla u^{\frac p 2} \|_{L^2(\Omega)}^{ 2\frac{p+1}{p}\theta_{\epsilon}}  \| u^{\frac p 2}\|_{L^2(\Omega)}^{ 2\frac{p+1}{p}(1-\theta_{\epsilon})}+ C_{GN}   \| u^{\frac p 2}\|_{L^2(\Omega)}^{ 2\frac{p+1}{p}}\\
& \notag  =  C_{GN}  \Big(  \int_{\Omega}  |\nabla u^{\frac p 2}|^2  dx \Big)^{\frac{p+1}{p}\theta_{\epsilon}}  \Big(   \int_{\Omega} u^p dx \Big)^{ \frac{p+1 }{p}(1-\theta_{\epsilon})}\\
& \notag  +  C_{GN}  \Big(\int_{\Omega} u^{p} dx\Big)^{\frac{p+1}p},
\end{align}
\vskip.2cm
with $\mathsf{p}= 2\frac {p+1}{p},\  \mathsf{r} = \mathsf{q}= \mathsf{s}=2, \ a= \theta_{\epsilon} := \frac {N(1+ \epsilon)}{2(p+1+\epsilon)} \in(0,1)$ for all $p> \frac N 2$ and sufficiently small $\epsilon >0$.\\
Now, in the first term of \eqref{u^p+1+ epsilon}, we apply the Young's inequality to obtain
\begin{align}
 \label{u^p+1+ epsilon bis}
&  \Big(\int_{\Omega} u^{p+1+\epsilon} dx \Big)^{\frac {p+1}{p+1+\epsilon}}  \\
 \notag  & \leq  c_4 \int_{\Omega}  |\nabla u^{\frac p 2}|^2  dx + c_5\Big(\int_{\Omega}  u^p dx\Big)^{\sigma} +  C_{GN}  \Big(\int_{\Omega} u^{p} dx\Big)^{\frac{p+1}p},
\end{align}
\vskip.2cm
with 
\begin{align*}
&c_4:= \frac{N(1+\epsilon)(p+1) }{2p(p+1 + \epsilon)}  C_{GN} , \quad
c_5:= C_{GN} \Big(\frac{2p (p+1+ \epsilon)- N(p+1) (1+\epsilon)}{2p(p+1+\epsilon)} \Big),\\
&\sigma :=\frac{2(p+1) - \frac{N(p+1)(1+\epsilon)}{p+1+\epsilon} }{2p-\frac{N(1+\epsilon)(p+1)}{p+1+\epsilon}}.
\end{align*}
Note that we can fix $\epsilon>0$ such that $2p -N(1+\epsilon) >0$.\\
Plugging \eqref{u^p+1 bis} and \eqref{u^p+1+ epsilon bis} into \eqref{I2 6} leads to
\begin{align}
 \label{I2 final}
& \mathcal I_2  \leq C \int_{\Omega}  |\nabla u^{\frac p 2}|^2  dx + c_1 \int_{\Omega} u^p dx + C_{GN}  \Big(\int_{\Omega}  u^p dx\Big)^{\frac {p+1}{p}} \\
&  \notag  +  \tilde c_1\Big(\int_{\Omega}  u^p dx\Big)^{\frac {2(p+1)-N}{2p-N}}+c_5 \Big(\int_{\Omega} u^{p} dx\Big)^{\sigma}  
 \end{align}
with $C:=  \frac{N}{2p} \epsilon_1 C_{GN} + c_1, \ \epsilon_1>0, \ \ \tilde c_1:= C_{GN}  \frac{2p-N}{2p \epsilon_1^{\frac N{2p-N}}}c_2.$\\ 
Finally, combining \eqref{I2 final} with \eqref{Psi'} and \eqref{I1} and choosing $\epsilon_1$ such that the term containing $\int_{\Omega} |\nabla u^{\frac p 2}|^2 dx$ vanishes,  we arrive at
\begin{align}\label {Psi' final}
\Psi' \leq c_1 \Psi + C_{GN} \Psi^{\frac {p+1}{p}} + \tilde c_1 \Psi^{\frac {2(p+1)-N}{2p-N}} + c_5 \Psi^{ \sigma}.
\end{align}
Integrating \eqref{Psi' final} from $0$ to $T_{max}$, we arrive at the desired lower bound \eqref{lower Tmax in Lp} with $B_1:= c_1$, $B_2:= C_{GN}$, $B_3:= \tilde c_1$, $B_4:= c_5$ and $\gamma_1 := \frac {p+1}{p}, \ \gamma_2:= \frac {2(p+1)-N}{2p-N}, \ \gamma_3:= \sigma$.}\qed 
\end{prth1.3}
 
\vspace{4mm}

 \remark
\rm{ We note that it is possible to reduce \eqref{Psi' final} so as to have an explicit expression of the lower bound $T$ of $T_{max}$. In fact, since $\Psi(t)$ blows up at time $T_{max}$, there exists a time $t_1 \in (0, T_{max})$ such that $\Psi(t) \geq \Psi_0$ for all $t\in (t_1, T_{max})$.
 Thus, taking into account that 
 \begin{align*}
1<  \gamma_1 <\gamma_2 
\end{align*}
and putting $\gamma := \max \{  \gamma_2, \gamma_3 \}$ 
we have 
 \begin{align} \label{Psi^gamma}
& \Psi  \leq \Psi^{\gamma} \Psi_0^{1-\gamma}, \\
& \notag \Psi^{\gamma_i}  \leq \Psi^{\gamma} \Psi_0^{\gamma_i - \gamma} ,\ \ \ i=1,2,3.
\end{align}
From \eqref{Psi' final} and \eqref{Psi^gamma} we arrive at
\begin{align}\label {Psi' final new}
\Psi' \leq A \Psi^{\gamma}, \ \  \forall t\in (t_1, T_{max}),
\end{align}
with $A:= B_1 \Psi_0^{1-\gamma} + B_2 \Psi_0^{ \gamma_1-\gamma} + B_3   \Psi_0^{\gamma_2 - \gamma} + B_4 \Psi_0^{ \gamma_3-\gamma }$, and $\Psi_0$ in \eqref{Psi}.\\

Integrating \eqref{Psi' final new} from $t=0$ to $t= T_{max}$, we obtain 
\begin{align}\label {lower}
\frac 1 { (\gamma -1) \Psi_0^{\gamma-1}}= \int_{\Psi_0}^{\infty} \frac {d\eta}{\eta^{\gamma}} \leq A \int_{t_1}^{T_{max}} d\tau \leq A \int_{0}^{T_{max}} d \tau =A T_{max}.
\end{align}
We conclude, by \eqref{lower}, that the solution of \eqref{sys1} is bounded in $[0,T]$ with $T:= \frac 1 { A (\gamma -1) \Psi_0^{\gamma-1}}.$

}

\section*{Acknowledgments}
M. Marras and S. Vernier-Piro are members of the Gruppo Nazionale per l'Analisi Matematica, la Probabilit$\grave{\rm a}$ e le loro Applicazioni (GNAMPA) of the Istituto Nazionale di Alta Matematica (INdAM). 

\subsection*{Financial disclosure}

M. Marras is partially supported by the research project: {\it Evolutive and stationary Partial Differential Equations with a focus on biomathematics} (Fondazione di Sardegna 2019), and by the grant PRIN n. PRIN-2017AYM8XW: {\it Non-linear Differential Problems via Variational, Topological and Set-valued Methods}.\\
T. Yokota is partially supported by Grant-in-Aid for Scientific Research (C), No. 21K03278, and by Tokyo University of Science Grant for International Joint Research. 

\newpage

  \end{document}